\newtheorem{theorem}{Theorem}
\newtheorem{corollary}{Corollary}
\newtheorem{lemma}{Lemma}
\newtheorem{proposition}{Proposition}
\newcommand{\Z}{\mathbb{Z}}
\newcommand{\F}{\mathbb{F}}
\newcommand{\athree}{A(3)}
\newcommand{\bthree}{B(3)}
\newcommand{\cthree}{C(3)}
\newcommand{\cthreen}{(\cthree)^n}
\title{Navigating in the Cayley graph of $SL_2(\F_p)$\\
 and applications to hashing}
\author[]{Lisa Bromberg}
\address{Graduate Center, City University of New York}
\email{lisa.bromberg@gmail.com}
\author[]{Vladimir Shpilrain}
\address{Department of Mathematics, The City  College  of New York, New York,
NY 10031} \email{shpil@groups.sci.ccny.cuny.edu}
\thanks{Research of the second author was partially supported by
the NSF grant CNS-1117675}
\author[]{Alina Vdovina}\address{School of Mathematics and
Statistics, University of Newcastle, Newcastle upon Tyne, NE1 7RU,
U.K.} \email{alina.vdovina@ncl.ac.uk}
\begin{document}

\maketitle

\begin{abstract}

Cayley hash functions are based on a simple idea of using a pair of
(semi)group elements,  $A$ and  $B$, to hash the 0 and 1 bit,
respectively, and  then to hash an arbitrary bit string in the
natural way, by using multiplication of elements in the (semi)group.
In this paper, we focus on hashing with $2 \times 2$ matrices over
$\F_p$. Since there are many known pairs of $2 \times 2$ matrices
over $\Z$ that generate a free monoid, this yields numerous pairs of
matrices over $\F_p$, for a sufficiently large prime $p$, that are
candidates for collision-resistant hashing. However, this trick can
``backfire", and lifting matrix entries to $\Z$ may facilitate
finding a collision. This ``lifting attack" was successfully used by
Tillich and Z\'emor in the special case where two matrices $A$ and
$B$ generate (as a monoid) the whole monoid $SL_2(\Z_+)$. However,
in this paper we show that the situation with other, ``similar",
pairs of matrices from $SL_2(\Z)$ is different, and the ``lifting
attack" can (in some cases) produce collisions in the {\it group}
generated by $A$ and $B$, but not in the positive {\it monoid}.
Therefore, we argue that for these pairs of matrices, there are no
known attacks at this time that would affect security of the
corresponding hash functions. We also give explicit lower bounds on
the length of collisions for hash functions corresponding to some
particular pairs of matrices from $SL_2(\F_p)$.

\end{abstract}

\section{Introduction}

Hash functions are easy-to-compute compression functions  that take
a variable-length input and convert it to a fixed-length output.
Hash functions are used as compact representations, or digital
fingerprints, of data and to provide message integrity. Basic
requirements are well known:

\begin{enumerate}

  \item {\it Preimage resistance} (sometimes called {\it non-invertibility}): it should be computationally
infeasible to find an input which hashes to a specified output;

\item {\it Second pre-image resistance}: it
should be computationally infeasible to find a second input that
hashes to the same output as a specified input;

\item {\it Collision resistance}: it should
be computationally infeasible to find two different inputs that hash
to the same output.

\end{enumerate}

A challenging problem is to determine mathematical properties of a
hash function that would ensure (or at least, make it likely) that
the requirements  above are met.

Early suggestions (especially the SHA family) did not really use any
mathematical ideas apart from the Merkle-Damgard construction for
producing collision-resistant hash functions from
collision-resistant compression functions (see e.g.~\cite{MOV}); the
main idea was just to ``create a mess" by using complex iterations
(this is not meant in a derogatory sense, but just as an opposite of
using mathematical structure one way or another).

An interesting direction worth mentioning is constructing hash
functions that are provably as secure as underlying assumptions,
e.g.~as discrete logarithm assumptions; see \cite{CLS} and
references therein. These hash functions however tend to be not very
efficient. For a general survey on hash functions we refer to
\cite{MOV}.

Another direction, relevant to the present paper, is using a pair of
elements, $A$ and  $B$, of a semigroup $S$, such that the Cayley
graph of the semigroup generated by $A$ and  $B$ is expander, in the
hope that such a graph would have a large girth and therefore there
would be no short relations. Probably the most popular
implementation of this idea so far is the Tillich-Z\'emor hash
function \cite{TZ}. We refer to \cite{Petit_thesis} and
\cite{Petit_Rubik} for a more detailed survey on Cayley hash
functions.

The Tillich-Z\'emor hash function, unlike functions in the SHA
family, is {\it not} a block hash function, i.e., each bit is hashed
individually. More specifically, the ``0" bit is hashed to a
particular $2\times 2$ matrix $A$, and the ``1" bit is hashed to
another $2\times 2$ matrix $B$. Then a bit string is hashed  simply
to the product of matrices $A$ and $B$ corresponding to bits in this
string. For example, the bit string 1000110 is hashed to the matrix
$BA^3B^2A$.

Tillich and Z\'emor use matrices $A$, $B$ from the group $SL_2(R)$,
where $R$ is a commutative ring (actually, a field) defined as
$R={\mathbf F}_2[x]/(p(x))$. Here ${\mathbf F}_2$ is the field with
two elements,  ${\mathbf F}_2[x]$ is the ring of polynomials over
${\mathbf F}_2$,  and $(p(x))$ is the ideal of ${\mathbf F}_2[x]$
generated by   an irreducible polynomial $p(x)$ of degree $n$
(typically,  $n$ is a prime, $127 \le n \le 170$); for example,
$p(x)=x^{131}+x^7+x^6+x^5+x^4+x+1$.
 Thus, $R={\mathbf F}_2[x]/(p(x))$ is isomorphic to ${\mathbf F}_{2^n}$, the field with
 $2^n$ elements.

Then, the matrices  $A$ and  $B$ are:

\begin{displaymath}
A = \left(
 \begin{array}{cc} \alpha & 1 \\ 1 & 0 \end{array} \right) , \hskip 1cm B = \left(
 \begin{array}{cc} \alpha & \alpha+1 \\ 1 & 1 \end{array} \right),
\end{displaymath}

\noindent where $\alpha$ is a root of $p(x)$.
\smallskip

Another idea of the same kind is to use a pair of $2 \times 2$
matrices, $A$ and $B$, over $\Z$  that generate a free monoid, and
then reduce the entries modulo a large prime $p$ to get matrices
over $\F_p$. Since there cannot be an equality of two different
products of positive powers of $A$ and $B$ unless at least one of
the entries in at least one of the products is $\ge p$, this gives a
lower bound on the minimum length of bit strings where a collision
may occur. This lower bound is going to be on the order of $\log p$;
we give more precise bounds for some particular examples of $A$ and
$B$ in our Section \ref{Girth}.

The first example of a pair of matrices over $\Z$  that generate a
free monoid is:

\begin{displaymath}
A(1) = \left(
 \begin{array}{cc} 1 & 1 \\ 0 & 1 \end{array} \right) , \hskip 1cm B(1) = \left(
 \begin{array}{cc} 1 & 0 \\ 1 & 1 \end{array} \right).
\end{displaymath}

These matrices are obviously invertible, so they actually generate
the whole group $SL_2(\Z)$. This group is not free, but the {\it
monoid} generated by $A(1)$ and $B(1)$ is free, and this is what
matters for hashing because only positive powers of $A(1)$ and
$B(1)$ occur in hashing.  However, the fact that these two matrices
generate the whole  $SL_2(\Z)$ yields an attack on the corresponding
hash function (where the matrices $A(1)$ and $B(1)$ are considered
over $\mathbf F_p$, for a large $p$), see \cite{TZ_attack}, where a
collision is found by using Euclidean algorithm on the entries of a
matrix.

At this point, we note that a pair of matrices

\begin{displaymath}
A(x) = \left(
 \begin{array}{cc} 1 & x \\ 0 & 1 \end{array} \right) , \hskip 1cm B(y) = \left(
 \begin{array}{cc} 1 & 0 \\ y & 1 \end{array} \right)
\end{displaymath}

\noindent generate a free subgroup of $SL_2(\Z)$ if $xy \ge 4$.

In Section \ref{attack}, we consider the following pair of matrices:

\begin{displaymath}
A(2) = \left(
 \begin{array}{cc} 1 & 2 \\ 0 & 1 \end{array} \right) , \hskip 1cm B(2) = \left(
 \begin{array}{cc} 1 & 0 \\ 2 & 1 \end{array} \right).
\end{displaymath}

By using a result from an old paper of Sanov \cite{Sanov} and
combining it with the attack on hashing with   $A(1)$ and $B(1)$
offered in \cite{TZ_attack}, we show that there is an  efficient
heuristic algorithm that finds circuits of length $O(\log p)$ in the
Cayley graph of the {\it group} generated by $A(2)$ and $B(2)$,
considered as matrices over $\mathbf F_p$. However, this has no
bearing on the (in)security of the hash function based on $A(2)$ and
$B(2)$ since in hashing only positive powers of $A(2)$ and $B(2)$
are used, and group relations of length $O(\log p)$ produced by the
mentioned algorithm will involve negative as well as positive powers
with overwhelming probability.

To conclude the Introduction, we mention that the fact that Cayley
graphs of the groups generated by $A(1), B(1)$ and by $A(2), B(2)$
are expanders was known for a couple of decades (see e.g. \cite{Lub}
for all background facts on expanders), but the same property of the
Cayley graph relevant to $A(3), B(3)$ was a famous 1-2-3 question of
Lubotzky, which was settled in the positive in \cite{BG} using
results from \cite{H}.

We also give a word of caution: while intuitively, we think of
expander graphs as graphs with a large (``expanding") girth, this is
not necessarily the case in general. In particular, first explicit
examples of expander graphs due to Margulis\cite{Marg}  have bounded
girth.

\section{Hashing with $A(2)$ and $B(2)$ and  circuits in the Cayley graph}
\label{attack}

In this section, motivated by hashing with the  matrices $A(2)$ and
$B(2)$ considered as matrices over $\mathbf F_p$,  we discuss
circuits in the relevant Cayley graph.

Tillich and Z\'emor \cite{TZ_attack} offered an attack on the  hash
function based on $A(1)$ and $B(1)$ (again, considered as matrices
over $\mathbf F_p$). To the best of our knowledge, this is the only
published attack on that hash function. In this section we explain
why this particular attack should not work with the matrices $A(2)$
and $B(2)$, and this therefore leaves the door open for using these
matrices (over $\mathbf F_p$, for a sufficiently large $p$) for
hashing.

First we explain, informally, what appears to be the reason why the
attack from \cite{TZ_attack} should not work with $A(2)$ and $B(2)$.
The reason basically is that, while $A(1)$ and $B(1)$ (considered
over $\Z$) generate (as a monoid!) the whole monoid of $2 \times 2$
matrices over $\Z$ with positive entries, with the matrices $A(2)$
and $B(2)$ the situation is much less transparent. There is a result
from an old paper by Sanov \cite{Sanov} that says: the {\it
subgroup} of $SL_2(\Z)$ generated by $A(2)$ and $B(2)$ consists of
{\it all} matrices of the form $\left( \begin{array}{cc} 1+4m_1 &
2m_2 \\ 2m_3 & 1+m_4\end{array} \right),$ where all $m_i$ are
arbitrary integers. This, however, does not tell much about the {\it
monoid} generated by $A(2)$ and $B(2)$. In fact, a generic matrix of
the above form would {\it not} belong to this monoid. This is not
surprising because: (1) $A(2)$ and $B(2)$ generate a free group, by
another result of Sanov \cite{Sanov}; (2) the number of different
elements represented by {\it all} freely irreducible words in $A(2)$
and $B(2)$ of length $m \ge 2$ is $4 \cdot 3^{m-1}$, whereas  the
number of different elements represented by {\it positive} words of
length $m \ge 2$ is $2^m$. Thus, the share of matrices in the above
form representable by positive words in $A(2)$ and $B(2)$ is
exponentially negligible.

What Tillich and Z\'emor's ``lifting attack" \cite{TZ_attack} can
still give is an efficient heuristic algorithm that finds relations
of length $O(\log p)$ in the {\it group} generated by $A(2)$ and
$B(2)$, considered as matrices over $\mathbf F_p$. We describe this
algorithm below because we believe it might be useful in other
contexts, although it has no bearing on the security of the hash
function based on $A(2)$ and $B(2)$ since in hashing only positive
powers of $A(2)$ and $B(2)$ are used, and group relations of length
$O(\log p)$ produced by the algorithm mentioned above will involve
negative as well as positive powers with overwhelming probability,
even if $p$ is rather small. What one would need to attack the hash
function corresponding to $A(2)$ and $B(2)$ is a result, similar to
Sanov's, describing all matrices in the {\it monoid} generated by
$A(2)$ and $B(2)$.

We are now going to use a combination of the attack on  $A(1)$ and
$B(1)$ offered in \cite{TZ_attack} with the aforementioned result of
Sanov \cite{Sanov} to find relations of length $O(\log p)$ in the
{\it group} generated by $A(2)$ and $B(2)$. %The result we refer to is: the subgroup of
%$SL_2(\Z)$ generated by $A(2)$ and $B(2)$ consists of {\it all}
%matrices of the form $\left( \begin{array}{cc} 1+4m_1 & 2m_2 \\ 2m_3
%& 1+m_4\end{array} \right),$ where all $m_i$ are arbitrary integers.

\begin{theorem}
There is an efficient heuristic algorithm that finds particular
relations of the form $w(A(2), B(2))=1$, where $w$ is a group word
of length $O(\log p)$, and the matrices $A(2)$ and $B(2)$ are
considered over $\F_p$.
\end{theorem}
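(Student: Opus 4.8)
The plan is to reduce the problem of finding relations in the group generated by $A(2)$ and $B(2)$ over $\F_p$ to the already-understood problem of finding relations in the group generated by $A(1)$ and $B(1)$ over $\F_p$, and then invoke the Tillich--Z\'emor lifting attack \cite{TZ_attack} as a black box. Concretely, by Sanov's theorem the subgroup of $SL_2(\Z)$ generated by $A(2)$ and $B(2)$ is the congruence-type subgroup $\Gamma$ consisting of all matrices $\left(\begin{array}{cc} 1+4m_1 & 2m_2 \\ 2m_3 & 1+m_4\end{array}\right)$ with $m_i \in \Z$; moreover $\Gamma$ is free on $A(2), B(2)$, so every element of $\Gamma$ has a unique reduced expression as a word in these two generators, and this expression can be recovered algorithmically (e.g.\ by the ping-pong / continued-fraction procedure, since $A(2) = A(x)$ with $x=2$ and $B(2)=B(y)$ with $y=2$, and $xy = 4$).

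The key steps, in order. First I would run the Tillich--Z\'emor algorithm on the pair $A(1), B(1)$ over $\F_p$ to produce a short word $v = v(A(1),B(1))$ of length $O(\log p)$ that equals the identity in $SL_2(\F_p)$ --- equivalently, a matrix $M \in SL_2(\Z)$ lying in the kernel of reduction mod $p$ (so all off-diagonal entries divisible by $p$ and diagonal entries $\equiv 1$) that is expressed as a short positive-or-mixed word in $A(1), B(1)$. Second, I would observe that $A(2) = A(1)^2$ and $B(2) = B(1)^2$ are \emph{not} enough by themselves to realize an arbitrary word in $A(1),B(1)$; instead I would lift to $\Z$ and ask for the matrix $M$ (or a suitable power/conjugate of it) to actually lie in $\Gamma$, i.e.\ to satisfy Sanov's congruence conditions mod $4$ as well as mod $p$. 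One can force this: replacing $p$ by $4p$ (or intersecting the two congruence conditions via CRT, assuming $p$ odd) and running the lifting attack to find a word $v$ with $v(A(1),B(1)) \equiv I \pmod{p}$ and $v(A(1),B(1)) \in \Gamma$ as an integer matrix. Third, having such an $M \in \Gamma$ that is trivial mod $p$, I would use freeness of $\Gamma$ on $A(2), B(2)$ to rewrite $M$ uniquely as a word $w(A(2),B(2))$; since the relevant matrix $M$ has entries that are polynomially bounded (of size $p^{O(1)}$), its reduced length in $A(2),B(2)$ is $O(\log p)$, and the rewriting is efficient. Then $w(A(2),B(2)) = I$ in $SL_2(\F_p)$, which is the desired relation.

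The main obstacle is the middle step: guaranteeing that the short relator produced by the $A(1),B(1)$ attack can be taken to lie in Sanov's subgroup $\Gamma$ without blowing up its length beyond $O(\log p)$. The lifting attack as originally stated produces \emph{some} short matrix trivial mod $p$, with no control over its residue mod $4$; a naive fix (replace $p$ by $4p$) multiplies the modulus by a constant and so only changes the $O(\log p)$ bound by a constant factor, but one must check that the entries of the lifted integer matrix still obey the diagonal congruences $1+4m_1, 1+m_4$ and the off-diagonal $2m_2, 2m_3$ simultaneously --- i.e.\ that the intersection of ``trivial mod $p$'' with $\Gamma$ is itself reached by a short word. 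I expect this to go through because $\Gamma$ has finite index in $SL_2(\Z)$ and the Cayley graph of $SL_2(\F_p)$ with respect to $A(2),B(2)$ is still an expander (the $1$-$2$-$3$ theorem for the relevant generating set, cf.\ \cite{BG, H}), so short relators of length $O(\log p)$ exist and the heuristic search finds them; making the reduction fully rigorous rather than heuristic is exactly the point at which the theorem is stated as producing a \emph{heuristic} algorithm. Finally I would note, as the paper does, that the relation $w(A(2),B(2))=1$ so obtained will generically involve negative exponents, so it does not yield a collision for the hash function, which only uses positive words.
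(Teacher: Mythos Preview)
Your approach is correct in outline and follows the same broad strategy as the paper---lift to $\Z$, land in Sanov's subgroup $\Gamma$, and rewrite in $A(2),B(2)$---but the two key technical steps are handled differently. To force the lifted matrix $M$ into $\Gamma$, you propose running the Tillich--Z\'emor heuristic with modulus $4p$ (equivalently, imposing the mod-$4$ congruences alongside the mod-$p$ ones via CRT); the paper instead takes the unmodified output $M\equiv I\pmod p$ and right-multiplies by an explicit small matrix $S$ (entries of absolute value at most $5$) chosen according to the residue class of $M$ modulo $4$, tabulating the cases up to symmetry. For the rewriting step, you invoke the elementary ping-pong/continued-fraction factorization in the free group $\Gamma=\langle A(2),B(2)\rangle$, together with the observation that integer entries of size $p^{O(1)}$ force word length $O(\log p)$; the paper instead starts from the $A(1),B(1)$-word for $M\cdot S$ and appeals to automatic-group machinery (quasiconvexity of $\Gamma$ in the hyperbolic group $SL_2(\Z)$, via Theorem~2.3.10 of \cite{Epstein}) to rewrite into an $A(2),B(2)$-word with only linear length blowup. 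Your route is more self-contained and avoids the automatic-groups black box, at the cost of needing the Tillich--Z\'emor heuristic to succeed for the non-prime modulus $4p$, which is plausible but not literally what \cite{TZ_attack} establishes; the paper's route keeps the TZ step exactly as stated but imports heavier rewriting machinery and an explicit case analysis for the correction matrix $S$.
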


\begin{proof}
It was shown in \cite{TZ_attack} that:

\noindent {\bf (a)} For any prime $p$, there is an efficient
heuristic algorithm that finds positive integers $k_1, k_2, k_3,
k_4$ such that the matrix $\left( \begin{array}{cc} 1+k_1p & k_2p \\
k_3p & 1+k_4p\end{array} \right)$ has determinant 1 and all $k_i$
are of about the same magnitude  $O(p^2)$.
\medskip

\noindent {\bf (b)} A generic matrix from part (a) has an efficient
factorization (in $SL_2(\Z)$) in a product of positive powers of
$A(1)$ and $B(1)$, of length $O(\log p)$. (This obviously yields a
collision in $SL_2({\mathbf F}_p)$ since the matrix from part (a)
equals the identity matrix in $SL_2({\mathbf F}_p)$.)
\medskip

Now we combine these results with the aforementioned result of Sanov
the following way. We are going to multiply a matrix from (a) (call
it $M$) by a matrix from $SL_2(\Z)$  (call it $S$) with very small
(between 0 and 5 by the absolute value) entries, so that the
resulting matrix $M \cdot S$ has the form as in Sanov's result.
Since the matrix $M$, by the Tillich-Z\'emor results, has an
efficient factorization (in $SL_2(\Z)$) in a product $w(A(1), B(1))$
of  powers of $A(1)$ and $B(1)$ of length $O(\log p)$, the same
holds for the matrix $M \cdot S$. Then, since the matrix $M \cdot S$
is in ``Sanov's form", we know that it is, in fact, a product of
powers of $A(2)$ and $B(2)$.

Now we need one more ingredient to efficiently re-write a product of
$A(1)$ and $B(1)$ into a product of $A(2)$ and $B(2)$ without
blowing up the length too much.  This procedure is provided by
Theorem 2.3.10  in \cite{Epstein}. We cannot explain it without
introducing a lot of background material, but the fact is that,
since the group $SL_2(\Z)$ is hyperbolic    (whatever that means)
and the subgroup generated by $A(2)$ and $B(2)$ is quasiconvex
(whatever that means), there is a quadratic time algorithm (in the
length of the word $w(A(1), B(1))$) that re-writes $w(A(1), B(1))$
into a $u(A(2), B(2))$ such that $w(A(1), B(1)) = u(A(2), B(2))$ and
the length of $u$ is bounded by a constant (independent of $w$)
times the length of $w$.

Thus, what is now left to complete the proof is to exhibit, for all
possible matrices $M$ as in part (a) above, particular ``small"
matrices $S$ such that $M \cdot S$ is in ``Sanov's form". We are
therefore going to consider many different cases corresponding to
possible combinations of residues    modulo 4 of the entries of the
matrix $M$ (recall that $M$ has to have determinant 1), and in each
case we are going to exhibit the corresponding matrix $S$ such that
$M \cdot S$ is in ``Sanov's form". Denote by $\hat M$ the matrix of
residues modulo 4 of the entries of $M$.  Since the total number of
cases is too large, we consider matrices $\hat M$ ``up to a
symmetry".

\noindent {\bf (1)} $\hat M = \left(
 \begin{array}{cc} 1 & 0 \\ 1 & 1 \end{array} \right)$, ~$S = \left(
 \begin{array}{cc} 1 & 0 \\ 1 & 1 \end{array} \right)$

\noindent {\bf (2)} $\hat M = \left(
 \begin{array}{cc} 1 & 0 \\ 2 & 1 \end{array} \right)$, ~$S = \left(
 \begin{array}{cc} 1 & 0 \\ 2 & 1 \end{array} \right)$

\noindent {\bf (3)} $\hat M = \left(
 \begin{array}{cc} 1 & 0 \\ 3 & 1 \end{array} \right)$, ~$S = \left(
 \begin{array}{cc} 1 & 0 \\ 3 & 1 \end{array} \right)$

\noindent {\bf (4)} $\hat M = \left(
 \begin{array}{cc} 2 & 1 \\ 1 & 1 \end{array} \right)$, ~$S = \left(
 \begin{array}{cc} 1 & 3 \\ 3 & 2 \end{array} \right)$

\noindent {\bf (5)} $\hat M = \left(
 \begin{array}{cc} 2 & 3 \\ 3 & 1 \end{array} \right)$, ~$S = \left(
 \begin{array}{cc} 3 &  1 \\ 1 & 2 \end{array} \right)$

\noindent {\bf (6)} $\hat M = \left(
 \begin{array}{cc} 2 & 3 \\ 1 & 2 \end{array} \right)$, ~$S = \left(
 \begin{array}{cc} 0 & 1 \\ 3 & 2 \end{array} \right)$

\noindent {\bf (7)} $\hat M = \left(
 \begin{array}{cc} 2 & 1 \\ 1 & 3 \end{array} \right)$, ~$S = \left(
 \begin{array}{cc} 1 & 3 \\ 3 & 2 \end{array} \right)$

\noindent {\bf (8)} $\hat M = \left(
 \begin{array}{cc} 2 & 3 \\ 3 & 3 \end{array} \right)$, ~$S = \left(
 \begin{array}{cc} 1 &  1 \\ 1 & 2 \end{array} \right)$

\noindent {\bf (9)} $\hat M = \left(
 \begin{array}{cc} 3 & 3 \\ 0 & 1 \end{array} \right)$, ~$S = \left(
 \begin{array}{cc} 3 & 3 \\ 0 & 1 \end{array} \right)$

\noindent {\bf (10)} $\hat M = \left(
 \begin{array}{cc} 3 & 0 \\ 0 & 3 \end{array} \right)$, ~$S = \left(
 \begin{array}{cc} -1 & 0 \\ 0 & -1 \end{array} \right)$

\noindent {\bf (11)} $\hat M = \left(
 \begin{array}{cc} 3 & 0 \\ 1 & 3 \end{array} \right)$, ~$S = \left(
 \begin{array}{cc} -1 & 0 \\ 1 & -1 \end{array} \right)$

\noindent {\bf (12)} $\hat M = \left(
 \begin{array}{cc} 3 & 0 \\ 2 & 3 \end{array} \right)$, ~$S = \left(
 \begin{array}{cc} -1 & 0 \\ 2 & -1 \end{array} \right)$

\noindent {\bf (13)} $\hat M = \left(
 \begin{array}{cc} 3 & 0 \\ 3 & 3 \end{array} \right)$, ~$S = \left(
 \begin{array}{cc} -1 & 0 \\ 3 & -1 \end{array} \right)$

\noindent {\bf (14)} $\hat M = \left(
 \begin{array}{cc} 3 & 2 \\ 2 & 3 \end{array} \right)$, ~$S = \left(
 \begin{array}{cc} -1 & 2 \\ 2 & -5 \end{array} \right)$

\medskip

\noindent This completes the proof.

\end{proof}

To conclude this section, we point out an example of re-writing a
word in $A(1)$ and $B(1)$  into a word in  $A(2)$ and $B(2)$. All
matrices here are considered over $\Z$.

$$A(1)B(1)A(1)B(1)A(1)B(1) = A(2)A(2)B(2)^{-1}B(2)^{-1}A(2)^{-1}A(2)^{-1}B(2)B(2).$$

\noindent We see that even in this simple example, both positive and
negative powers of $A(2)$ and $B(2)$ are required.

\section{Girth of the Cayley graph relevant to $A(k)$ and $B(k)$}
\label{Girth}

Our starting point here is the following observation: the entries of
matrices that are products of length $n$ of positive powers of
$A(k)$ and $B(k)$ exhibit the fastest growth (as functions of $n$)
if $A(k)$ and $B(k)$ alternate in the product:
$A(k)B(k)A(k)B(k)\cdots$. More formally:

\begin{proposition}\label{alternate}
Let $w_n(a, b)$ be an arbitrary positive word of even length $n$,
and let $W_n = w_n(A(k), B(k))$, with $k \ge 2$. Let  $C_n = (A(k)
\cdot B(k))^{\frac{n}{2}}$. Then: {\bf (a)} the sum of entries in
any row of $C_n$ is at least as large as the sum of entries in any
row of $W_n$; {\bf (b)} the largest entry of $C_n$ is at least as
large as the largest entry of $W_n$.
\end{proposition}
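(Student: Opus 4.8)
The plan is to use the fact that right multiplication by $A(k)$ or $B(k)$ acts on the two rows of a matrix separately. Writing $W_n = x_1 x_2 \cdots x_n$ with each $x_i \in \{A(k), B(k)\}$, row $i$ of $W_n$ is the row vector $e_i x_1 \cdots x_n$, where $e_1 = (1,0)$ and $e_2 = (0,1)$. For a row vector $v = (s,t)$ with $s, t \ge 0$ one has $v\,A(k) = (s,\, ks+t)$ and $v\,B(k) = (s+kt,\, t)$; thus all entries stay nonnegative, and the row sum increases by $ks$, respectively by $kt$. This makes plausible a greedy principle: at each step multiply by the generator that adds $k$ times the \emph{larger} current coordinate. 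I would first record that this greedy rule, started from $e_1$, produces exactly the successive rows of $C_n = (A(k)B(k))^{n/2}$ (started from $e_2$ it produces the rows of $(B(k)A(k))^{n/2}$, which has the same entries up to swapping the two coordinates, hence the same row sums and the same largest entry).

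The heart of the matter is to prove that greedy is in fact optimal, and a naive exchange argument on row sums alone is unconvincing because ``being ahead in row sum'' need not be preserved by the dynamics. Instead I would track, for a nonnegative row vector $v$, its \emph{sorted} form $\mathrm{sort}(v) = (p,q)$ with $p \ge q \ge 0$, and write $u \preceq v$ to mean that each entry of $\mathrm{sort}(v)$ is at least the corresponding entry of $\mathrm{sort}(u)$. Introduce the map $G(p,q) = (kp+q,\, p)$; one checks it again returns a sorted pair and that it is precisely ``one greedy step'' expressed in sorted coordinates. Two facts then need verification: \textbf{(i)} $G$ is monotone, i.e.\ $p \ge p'$ and $q \ge q'$ imply $G(p,q) \succeq G(p',q')$ coordinatewise, hence the same for every iterate $G^m$; and \textbf{(ii)} a single right multiplication can never beat one greedy step, i.e.\ $\mathrm{sort}(v\,A(k)) \preceq G(\mathrm{sort}\,v)$ and $\mathrm{sort}(v\,B(k)) \preceq G(\mathrm{sort}\,v)$, with equality for the greedy choice. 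Fact (ii) is a short case split on whether $s \ge t$ or $s < t$, and it is exactly here that the hypothesis $k \ge 2$ enters (in the case $s < t$, $x = A(k)$, one needs $(k-2)(t-s) \ge 0$).

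With (i) and (ii) in hand, an induction on $n$ gives $\mathrm{sort}(v\,x_1 \cdots x_n) \preceq G^n(\mathrm{sort}\,v)$ for every positive word $x_1\cdots x_n$ and every nonnegative $v$: split off $x_1$, apply (ii) to get $\mathrm{sort}(v x_1) \preceq G(\mathrm{sort}\,v)$, push this through $G^{n-1}$ using monotonicity, and invoke the inductive hypothesis on $x_2 \cdots x_n$ with starting vector $v x_1$. Finally take $v = e_1$ or $v = e_2$ (both have $\mathrm{sort}(v) = (1,0)$) and $n$ even: every row of $W_n$ satisfies $\mathrm{sort}(\text{row}) \preceq G^n(1,0)$, whereas $\mathrm{sort}$ of the first (hence largest) row of $C_n$ equals $G^n(1,0)$. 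Since $u \preceq v$ forces $\mathrm{sort}(v)$ to have both the larger coordinate sum and the larger top coordinate, parts (a) and (b) fall out simultaneously. The only real obstacle is conceptual: resisting a greedy argument phrased only in terms of row sums, and instead setting things up with sorted vectors and the monotone map $G$, after which everything else is routine verification.
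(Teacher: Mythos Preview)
Your argument is correct and follows the same line as the paper's: reduce to the action on a single row vector (a pair of nonnegative integers) and prove by induction on the word length that the alternating/greedy sequence is optimal. Your packaging via the sorted order $\preceq$ and the monotone one-step map $G$ handles parts (a) and (b) simultaneously and is tidier than the paper's case analysis on the pair $(X,Y)$ versus a competing pair $(X',Y')$, but the underlying idea is identical.
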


\begin{proof}
First note that multiplying a matrix $X$ by $A(k)$ on the right
amounts to adding to the second column of $X$ the first column
multiplied by $k$. Similarly, multiplying $X$ by $B(k)$ on the right
amounts to adding to the first column of $X$ the second column
multiplied by $k$. This means, in particular, that when we build a
word in $A(k)$ and  $B(k)$ going left to right, elements of the
first row change independently of elements of the second row.
Therefore, we can limit our considerations to pairs of positive
integers, and the result will follow from the following

\begin{lemma}
Let  $(x, y)$ be a pair of positive integers and let $k \ge 2$. One
can apply transformations of the following two kinds: (1)
transformation $R$ takes $(x, y)$ to  $(x, y+kx)$; ~(2)
transformation $L$ takes $(x, y)$ to  $(x+ky, y)$. Among all
sequences of these transformations of the same length, the sequence
where $R$ and  $L$ alternate results in:  {\bf (a)} the largest sum
of elements in the final pair; {\bf (b)} the largest maximum element
in the final pair.
\end{lemma}

\begin{proof} We are going to prove {\bf (a)} and {\bf (b)} simultaneously using
induction by the length of a sequence of transformations. Suppose
our lemma holds for all sequences of length at most $m \ge 2$, with
the same initial pair $(x, y)$. Suppose the final pair after $m$
alternating transformations is $(X, Y)$. Without loss of generality,
assume that $X<Y$. That means the last applied transformation was
$R$. Now applying $L$ to $(X, Y)$ gives $(X+kY, Y)$, while applying
$R$ to $(X, Y)$ gives $(X, Y+kX)$. Since $X+kY > Y+kX$, applying $L$
results in a larger sum of elements as well as in a larger maximum
element. Thus, we have a sequence of $(m+1)$ alternating
transformations, and now we have to consider one more case.

Suppose some sequence of $m$ transformations applied to $(x, y)$
results in a pair $(X', Y')$ with $X'+Y'<X+Y, ~Y'<Y$, but $X'>X$.
Then applying $L$ to this pair gives $(X'+kY', Y')$, and the sum is
$X'+Y'+kY' <  X+Y+kY$ since $X'+Y'<X+Y$ and  $Y'<Y$. The maximum
element of the pair $(X'+kY', Y')$ is $X'+kY' = X'+Y'+ (k-1)Y'$.
Again, since $X'+Y'<X+Y$ and  $Y'<Y$, we have $X'+kY' < X+kY$. This
completes the proof of the lemma and the proposition.

\end{proof}

\end{proof}

This motivates us to consider powers of the matrix $C(k)=A(k)B(k)$
to get to entries larger than $p$ ``as quickly as possible".

%\subsection{Attack on $A(1)$ and $B(1)$}
%
%Denote by $M_n$ the following alternating product of the matrices
%$A(1)$ and $B(1)$: $M_n = A(1)B(1)A(1)B(1)\cdots$ ($n$ matrices
%altogether).
%
%
%The matrix $C(1)$ is $\left(
% \begin{array}{cc} 5 & 2 \\ 2 & 1 \end{array} \right)$. If we
% denote $(C(2))^n = \left(
% \begin{array}{cc} a_n & b_n \\ c_n & d_n \end{array} \right)$, then
% the following recurrence relations are easily proved by induction
% on $n$:

\subsection{Powers of $C(2)=A(2)B(2)$}
The matrix $C(2)$ is $\left(
 \begin{array}{cc} 5 & 2 \\ 2 & 1 \end{array} \right)$. If we
 denote $(C(2))^n = \left(
 \begin{array}{cc} a_n & b_n \\ c_n & d_n \end{array} \right)$, then
 the following recurrence relations are easily proved by induction
 on $n$:

$$a_n = 5a_{n-1} + 2b_{n-1}; \hskip .5cm b_n = c_n = 2a_{n-1} +
b_{n-1}; \hskip .5cm d_n = a_{n-1}.$$

Combining the recurrence relations for $a_n$ and $b_n$, we get $2b_n
= a_n - a_{n-1}$, so $2b_{n-1} = a_{n-1} - a_{n-2}$. Plugging this
into the displayed recurrence relation for $a_n$ gives

$$a_n = 6a_{n-1} - a_{n-2}.$$

Similarly, we get

$$b_n = 6b_{n-1} - b_{n-2}.$$

\noindent Solving these recurrence relations (with appropriate
initial conditions), we get

$$a_n = (\frac{1}{2}+\frac{1}{\sqrt{8}})(3+\sqrt{8})^n +
(\frac{1}{2}-\frac{1}{\sqrt{8}})(3-\sqrt{8})^n,  \hskip .3cm b_n =
\frac{1}{\sqrt{8}}(3+\sqrt{8})^n
-\frac{1}{\sqrt{8}}(3-\sqrt{8})^n.$$

Thus, $a_n$ is the largest entry of $(C(2))^n$, and we conclude that
no  entry of $(C(2))^n$ is larger than $p$ as long as $n <
\log_{_{3+\sqrt{8}}} p$. Since $C(2)=A(2)B(2)$ is a product of two
generators, $(C(2))^n$ has length  $2n$ as a word in the  generators
$A(2)$  and  $B(2)$. Therefore, no two positive words of length $\le
m$ in the generators $A(2)$  and  $B(2)$ (considered as matrices
over $\F_p$) can be equal as long as

$$m < 2 \log_{_{3+\sqrt{8}}} p = \log_{_{\sqrt{3+\sqrt{8}}}} p,$$

\noindent so we have the following

\begin{corollary}
There are no collisions of the form  $u(A(2), B(2)) = v(A(2), B(2))$
if positive words $u$ and  $v$ are of length less than
$\log_{_{\sqrt{3+\sqrt{8}}}} p.$ In particular, the girth of the
Cayley graph of the semigroup generated by $A(2)$ and  $B(2)$
(considered as matrices over $\F_p$) is at least
$\log_{_{\sqrt{3+\sqrt{8}}}} p.$

\end{corollary}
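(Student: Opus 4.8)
The plan is to deduce the corollary directly from Proposition~\ref{alternate} together with the explicit estimate for the entries of $(C(2))^n$ obtained above, the point being simply that no collision mod $p$ can occur while the integer lifts of the two matrices still have all entries in the interval $[0,p)$. So let $u$ and $v$ be distinct positive words in two letters of length less than $\log_{_{\sqrt{3+\sqrt{8}}}} p$, and suppose toward a contradiction that $u(A(2),B(2)) = v(A(2),B(2))$ in $SL_2(\F_p)$. By Sanov's result the group (and hence the monoid) generated by $A(2)$ and $B(2)$ is free, so, interpreting $u$ and $v$ as products of the integer matrices $A(2),B(2)$, we have $u(A(2),B(2)) \neq v(A(2),B(2))$ over $\Z$; the assumed relation over $\F_p$ therefore says these two distinct integer matrices are congruent entrywise modulo $p$.

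Next I would bound the entries of the integer matrices $u(A(2),B(2))$ and $v(A(2),B(2))$. Since $A(2)$ and $B(2)$ have nonnegative integer entries, every positive word in them is a matrix with nonnegative integer entries, and multiplying on the right by $A(2)$ or $B(2)$ never decreases an entry. Hence the largest entry of a positive word of length $\ell$ is at most the largest entry of a positive word of any even length $\geq \ell$, and by Proposition~\ref{alternate}(b) the latter is at most the largest entry of the alternating word $C_{2n} = (C(2))^n$ for $n = \lceil \ell/2 \rceil$. As computed above, that largest entry is $a_n = (\tfrac12+\tfrac1{\sqrt 8})(3+\sqrt 8)^n + (\tfrac12-\tfrac1{\sqrt 8})(3-\sqrt 8)^n < (3+\sqrt 8)^n$, since $\tfrac12+\tfrac1{\sqrt 8} < 1$ and the second summand is positive and less than $1$. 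Consequently $a_n < p$ as soon as $n \le \log_{_{3+\sqrt 8}} p$, i.e.\ as soon as $\ell$ is (up to the unavoidable $\pm 1$ coming from parity) less than $2\log_{_{3+\sqrt 8}} p = \log_{_{\sqrt{3+\sqrt{8}}}} p$.

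Combining the two steps: for $\ell < \log_{_{\sqrt{3+\sqrt{8}}}} p$ every entry of $u(A(2),B(2))$ and of $v(A(2),B(2))$, taken over $\Z$, lies in $\{0,1,\dots,p-1\}$, so reduction modulo $p$ is injective on the set of such matrices. This contradicts the fact that $u(A(2),B(2)) \neq v(A(2),B(2))$ over $\Z$ but the two are congruent modulo $p$, and the no-collision statement follows. For the girth statement I would observe that a closed walk in the Cayley graph of the semigroup $\langle A(2),B(2)\rangle \le SL_2(\F_p)$ yields a relation $u(A(2),B(2)) = v(A(2),B(2))$ between two distinct positive words whose lengths are bounded by the length of the walk, so the first part forces any such walk to have length at least $\log_{_{\sqrt{3+\sqrt{8}}}} p$. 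The only genuinely fiddly points are the parity bookkeeping in the length bound (handled by passing to the next even length as above) and making the walk-versus-relation correspondence precise; neither presents a real obstacle, as the substance is already contained in Proposition~\ref{alternate} and the closed-form expression for $a_n$.
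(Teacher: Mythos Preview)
Your argument is correct and follows essentially the same route as the paper: bound the entries of an arbitrary positive word via Proposition~\ref{alternate}(b) by the largest entry $a_n$ of $(C(2))^n$, use the closed form to see $a_n<(3+\sqrt 8)^n$, and conclude that reduction mod $p$ is injective on positive words of length below $\log_{\sqrt{3+\sqrt 8}}p$. The paper states the deduction more tersely (the inequality $a_n<(3+\sqrt 8)^n$ and the freeness of the monoid are left implicit), but the substance is the same; your only slight imprecision is the phrase ``the second summand is positive and less than $1$'', which is true but not quite the right reason---the clean way is to note that $(\tfrac12-\tfrac1{\sqrt 8})(3-\sqrt 8)^n < (\tfrac12-\tfrac1{\sqrt 8})(3+\sqrt 8)^n$, which immediately gives $a_n<(3+\sqrt 8)^n$.
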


The base of the logarithm here is $\sqrt{3+\sqrt{8}} \approx 2.4$.
Thus, for example, if $p$ is on the order of $2^{256}$,  then there
are no collisions of the form  $u(A(2), B(2)) = v(A(2), B(2))$ if
positive words $u$ and  $v$ are of length less than 203.

We also note, in passing, that our Proposition \ref{alternate} also
holds without the assumption on the  words $w_n(a, b)$ to be
positive if we consider the absolute values of the matrix entries
and their sums. Our lower bound on the girth of the Cayley graph of
the {\it group} generated by $A(2)$ and $B(2)$ therefore improves
(in this particular case) the lower bound given in \cite{H}, where
the base of the logarithm in the lower bound is 3.

\subsection{Powers of $C(3)=A(3)B(3)$}

The matrix $\cthree$ is $\begin{pmatrix}10&3\\3&1\end{pmatrix}$. If
we let $\cthreen=\begin{pmatrix}a_n & b_n\\ c_n & d_n\end{pmatrix}$,
and use the fact that $\cthreen=\cthree\cdot \cthree^{n-1}$, we get
the following recurrence relations:
\begin{align*}
a_n &= 10a_{n-1} + 3b_{n-1}\\
b_n &= 3a_{n-1} + b_{n-1}=c_n\\
d_n &= a_{n-1}
\end{align*}
Combining these recurrence relations for $a_n$ and $b_n$, we get
$$
a_n=11a_{n-1}-a_{n-2},\quad b_n=11b_{n-1}-b_{n-2}.
$$
Solving these recurrence relations with the initial conditions
$a_1=10, a_2=109$ and $b_1=3, b_2=33$, we get

\begin{gather*}
a_n=
\Bigl(\frac{9}{2\sqrt{117}}+\frac12\Bigr)\Bigl(\frac{11+\sqrt{117}}{2}\Bigr) +
\Bigl(\frac12-\frac{9}{2\sqrt{117}}\Bigr)\Bigl(\frac{11-\sqrt{117}}{2}\Bigr),\\
b_n=\frac{3}{\sqrt{117}}\Bigl(\frac{11+\sqrt{117}}{2}\Bigr) -
\frac{3}{\sqrt{117}}\Bigl(\frac{11-\sqrt{117}}{2}\Bigr).
\end{gather*}

From this we see that $a_n$ is the largest entry of $\cthreen$, so
no entry of $\cthreen$ is larger than $p$ if
$n<\log_{\frac{11+\sqrt{117}}{2}}p$. Since $\cthree$ is a product of
two generators, $\athree$ and $\bthree$, we have:

\begin{corollary}
There are no collisions of the form  $u(A(3), B(3)) = v(A(3), B(3))$
if positive words $u$ and  $v$ are of length less than
$2\log_{\frac{11+\sqrt{117}}{2}}p =
\log_{\sqrt{_{\frac{11+\sqrt{117}}{2}}}}p.$ In particular, the girth
of the Cayley graph of the semigroup generated by $A(3)$ and  $B(3)$
(considered as matrices over $\F_p$) is at least
$\log_{\sqrt{_{\frac{11+\sqrt{117}}{2}}}}p.$

\end{corollary}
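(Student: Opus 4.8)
The plan is to mirror almost verbatim the argument already carried out for $C(2)=A(2)B(2)$, now with the matrix $\cthree=\begin{pmatrix}10&3\\3&1\end{pmatrix}$. First I would record the entrywise recurrences for $\cthreen=\begin{pmatrix}a_n&b_n\\c_n&d_n\end{pmatrix}$ obtained from $\cthreen=\cthree\cdot\cthree^{n-1}$, namely $a_n=10a_{n-1}+3b_{n-1}$, $b_n=c_n=3a_{n-1}+b_{n-1}$, and $d_n=a_{n-1}$; these are proved by a one-line induction on $n$. Eliminating $b_{n-1}$ via $3b_n=a_n-a_{n-1}$ (and the analogous relation one step down) and substituting back yields the single scalar recurrence $a_n=11a_{n-1}-a_{n-2}$, and likewise $b_n=11b_{n-1}-b_{n-2}$, as already displayed in the excerpt. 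Solving the characteristic equation $t^2-11t+1=0$ gives roots $\tfrac{11\pm\sqrt{117}}{2}$, and fitting the stated initial conditions $a_1=10,a_2=109$, $b_1=3,b_2=33$ produces the closed forms already written down; one should double-check the initial data (e.g. $a_2=10\cdot10+3\cdot3=109$, $b_2=3\cdot10+3=33$) so that the constants in front of the two exponentials are correct.

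Next I would argue that $a_n$ dominates all other entries of $\cthreen$. Since all entries are positive and strictly increasing in $n$, and since $d_n=a_{n-1}<a_n$ while $b_n=c_n=3a_{n-1}+b_{n-1}<10a_{n-1}+3b_{n-1}=a_n$ (the inequality being immediate because $a_{n-1},b_{n-1}>0$), the largest entry of $\cthreen$ is exactly $a_n$. Hence no entry of $\cthreen$ exceeds $p$ as long as $a_n<p$, and from the closed form $a_n$ grows like $\bigl(\tfrac{11+\sqrt{117}}{2}\bigr)^n$ (the second term tends to $0$ since $0<\tfrac{11-\sqrt{117}}{2}<1$), so this holds whenever $n<\log_{\frac{11+\sqrt{117}}{2}}p$. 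By Proposition~\ref{alternate}, for \emph{any} positive word $w$ of even length $2n$ in $A(3),B(3)$ the largest entry of $w(A(3),B(3))$ is at most the largest entry of $\cthreen=(A(3)B(3))^n$, namely $a_n$; the same bound for odd lengths follows by monotonicity (truncating or extending by one letter). Consequently, if two positive words $u,v$ of length at most $m$ satisfy $u(A(3),B(3))=v(A(3),B(3))$ over $\F_p$, then, lifting to $\Z$, the difference of the two integer matrices is a nonzero multiple of $p$ in some entry, forcing some entry of one of the products to be $\ge p$; this is impossible when $m<2\log_{\frac{11+\sqrt{117}}{2}}p=\log_{\sqrt{\frac{11+\sqrt{117}}{2}}}p$. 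Since the girth of the Cayley graph of the semigroup is, by definition, the shortest length of such a collision, it is at least $\log_{\sqrt{\frac{11+\sqrt{117}}{2}}}p$, which is the claim.

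I do not expect a genuine obstacle here: the only real content is the eigenvalue computation and the domination claim, both of which are routine. The one place to be careful is the transition from "no entry of $(A(3)B(3))^n$ exceeds $p$" to "no positive word of length $\le m$ collides," which relies on Proposition~\ref{alternate} to reduce from an arbitrary positive word to the alternating word, together with the observation that the girth counts a collision $u=v$ as a closed path of length $|u|+|v|\le 2m$ — so the bound one gets on the girth is $2n$ with $n<\log_{\frac{11+\sqrt{117}}{2}}p$, i.e. $\log_{\sqrt{\frac{11+\sqrt{117}}{2}}}p$, matching the statement. A minor bookkeeping subtlety is that in part (a) of Proposition~\ref{alternate} it is the row sums, not just the max entry, that are controlled; but part (b) gives exactly the max-entry comparison needed, so no extra work is required. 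Finally, as a sanity check one notes $\sqrt{\frac{11+\sqrt{117}}{2}}\approx 3.3$, slightly larger than the base $\approx2.4$ obtained for $A(2),B(2)$, which is consistent with $A(3),B(3)$ having larger entries and hence a longer girth for the same $p$.
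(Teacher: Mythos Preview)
Your proposal is correct and follows essentially the same route as the paper: derive the recurrences for the entries of $\cthreen$, reduce to the scalar recurrence $a_n=11a_{n-1}-a_{n-2}$ with roots $\tfrac{11\pm\sqrt{117}}{2}$, observe that $a_n$ is the dominant entry, and then invoke Proposition~\ref{alternate} together with freeness over $\Z$ to pass from the alternating word to arbitrary positive words. If anything you supply more detail than the paper does (the explicit inequality $b_n<a_n$ and the lifting argument); the only slip is in your final sanity check, where a larger logarithm base gives a \emph{smaller} lower bound on the girth, not a larger one.
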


The base of the logarithm here is $\sqrt{_{\frac{11+\sqrt{117}}{2}}}
\approx  3.3$. For example, if $p$ is on the order of $2^{256}$,
then there are no collisions of the form  $u(A(2), B(2)) = v(A(2),
B(2))$ if positive words $u$ and  $v$ are of length less than 149.

\section{Conclusions}

We have analyzed the girth of the Cayley graph of the group and the
monoid generated by pairs of matrices $A(k)$ and $B(k)$ (considered
over $\F_p$), for various $k \ge 1$. Our conclusions are:

$\bullet$ The ``lifting attack" by Tillich  and Z\'emor
\cite{TZ_attack} that produces explicit relations of length $O(\log
p)$ in the monoid generated by $A(1)$ and $B(1)$, can be used in
combination with an old result by Sanov \cite{Sanov} and some
results from the theory of automatic groups \cite{Epstein} to
efficiently produce explicit relations of length $O(\log p)$ in the
{\it group} generated by $A(2)$ and $B(2)$.

$\bullet$  Generically, relations produced by this method will
involve negative as well as positive powers of $A(2)$ and $B(2)$,
and therefore will {\it not} produce collisions for the
corresponding hash function.

$\bullet$ In the absence of a result for $A(3)$ and $B(3)$ similar
to Sanov's result for $A(2)$ and $B(2)$, at this time there is no
known efficient algorithm for producing explicit relations of length
$O(\log p)$ even in the {\it group} generated by $A(3)$ and $B(3)$,
let alone in the monoid generated by this pair. At the same time,
such relations do exist by the pigeonhole principle.

$\bullet$ We have computed an explicit lower bound for the length of
relations in the monoid generated by $A(2)$ and $B(2)$; the lower
bound is $\log_b p,$ where the base $b$ of the logarithm is
approximately $2.4$. For the monoid generated by $A(3)$ and $B(3)$,
we have a similar lower bound, with base $b$ of the logarithm
approximately equal to $3.3$.

$\bullet$ We conclude that at this time, there are no known attacks
on hash functions corresponding to the pair $A(2)$ and $B(2)$ or
$A(3)$ and $B(3)$ and therefore no visible threat to their security.

\bigskip

\noindent {\it Acknowledgement.} We are grateful to Ilya Kapovich
for helpful comments, in particular for pointing out the relevance
of some results from \cite{Epstein} to our work. We are also
grateful to Harald Helfgott for useful discussions.

\baselineskip 11 pt


\begin{thebibliography}{ABC}


\bibitem{BG}
J. Bourgain, A. Gamburd, {\it Uniform expansion bounds for Cayley
graphs of $SL_2({\mathbf F}_p)$}. Ann. of Math. (2) {\bf 167}
(2008),  625--642.

\bibitem{CLS}
S. Contini, A. K. Lenstra and R. Steinfeld,  {\it VSH, an Efficient
and Provable Collision Resistant Hash Function}, in: Eurocrypt 2006,
Lecture Notes  Comp. Sci. {\bf 4004} (2006), 165--182.

\bibitem{Epstein}
D. B. A. Epstein, J.  Cannon, D. F. Holt, S. V. F.  Levy, M. S.
Paterson, W. P. Thurston,  {\it  Word processing in groups.} Jones
and Bartlett Publishers, Boston, MA, 1992.


\bibitem{H}
H. A. Helfgott,{\it Growth and generation in $SL_2(\Z/p\Z$)} Ann. of
Math. (2) {\bf 167} (2008),  601--623.

\bibitem{Larsen}
M. Larsen, {\it Navigating the Cayley graph of $SL_2({\mathbf
F}_p)$}, Int. Math. Res. Notes  {\bf 27} (2003), 1465-–1471.

\bibitem{Lub} A. Lubotzky, {\it Discrete groups, expanding graphs
and invariant measures}, Progress in Mathematics {\bf 125},
Birkh\"auser Verlag, Basel, 1994.

\bibitem{Marg} G. A. Margulis, {\it Explicit constructions of
concentrators}, Problems of Information Transmission {\bf 9} (1973),
no. 4, 325--332.

\bibitem{MOV}
A. Menezes, P. van Oorschot and S. Vanstone, {\it Handbook of
Applied Cryptography}, CRC Press, 1997.


\bibitem{Petit_thesis}
C. Petit, {\it On graph-based cryptographic hash functions}, PhD
thesis, 2009.

\bibitem{Petit_Rubik}
C. Petit and J.-J. Quisquater, {\it Rubik's for cryptographers},
Notices  Amer. Math. Soc. {\bf  60} (2013), 733--739.

\bibitem{Sanov}
I. N. Sanov, {\it A property of a representation of a free group}
(Russian), Doklady Akad. Nauk SSSR (N. S.)  {\bf 57} (1947),
657--659.

\bibitem{TZ_attack}
J.-P. Tillich and G. Z\'emor,  {\it Group-theoretic hash functions},
in Proceedings of the First French-Israeli Workshop on Algebraic
Coding, Lecture notes  Comp. Sci. {\bf  781} (1993),   90--110.

\bibitem{TZ}
J.-P. Tillich and G. Z\'emor, {\it Hashing with $SL_2$}, in CRYPTO
1994, Lecture Notes  Comp. Sci. {\bf 839} (1994), 40--49.



\end{thebibliography}
\end{document}